\documentclass[11pt]{article}
\usepackage[T1]{fontenc}
\usepackage{lmodern}
\usepackage{amsmath,amssymb,amsthm,mathtools}
\usepackage[margin=1in]{geometry}
\usepackage[colorlinks=true,linkcolor=blue,citecolor=blue,urlcolor=blue,hyperfootnotes=false]{hyperref}
\usepackage{microtype}
\usepackage{indentfirst}
\newtheorem{theorem}{Theorem}[section]
\newtheorem{proposition}[theorem]{Proposition}
\newtheorem{lemma}[theorem]{Lemma}
\newtheorem{corollary}[theorem]{Corollary}

\theoremstyle{remark}
\newcommand{\Z}{\mathbb Z}
\newcommand{\Q}{\mathbb Q}
\newcommand{\E}{E}
\newcommand{\wh}{\widehat}
\DeclareMathOperator{\Frac}{Frac}
\title{Laurentness of Svinin's even-order recurrences}
\author{Qiyue Tang}
\date{}
\begin{document}
\maketitle
\begingroup
\renewcommand{\thefootnote}{}
\footnotetext{\textit{2020 Mathematics Subject Classification.} Primary 13F60; Secondary 39A20.\par\textit{Keywords:} Laurent phenomenon, rational recurrences,
Laurent phenomenon algebras, period-one seeds.}
\endgroup
\begin{abstract}
We prove that every term of Svinin's even-order recurrences is a Laurent
polynomial in the initial values over the integer polynomial ring in the
recurrence parameters.
\end{abstract}

\section{Introduction}

Rational recurrences can produce integer sequences because the divisions
introduced at successive steps cancel. The Laurent property expresses a
stronger symbolic form of this cancellation: each term has a denominator
which is a monomial in the initial values. For a sequence $(t_n)_{n\in\Z}$ and fixed parameters
$\alpha_0,\alpha_1$, the Somos--4 recurrence
\begin{equation}\label{eq:somos4}
t_nt_{n+4}=\alpha_0t_{n+2}^2+\alpha_1t_{n+1}t_{n+3}
\end{equation}
has this property over $\Z[\alpha_0,\alpha_1]$, as a case of the two-term
Gale--Robinson theorem of Fomin and Zelevinsky
\cite[Theorem 1.8]{FZ}.

Svinin constructed a family of odd-order homogeneous recurrences and
proved their Laurent property \cite{Svinin}. Section 5.3 of that paper
also defines, for each integer $g\geq1$, an even-order family $R_{2g+2}$ beginning with
\eqref{eq:somos4}, and asks for a proof of its Laurent property.
The subsequent paper \cite[Remark 3.8]{SvininInv} records computational
evidence for the even-order family, whose parameters are
$\alpha_0,\ldots,\alpha_g$. For $g=2$, the recurrence is
\begin{equation}\label{eq:R6}
t_nt_{n+6}=\alpha_0t_{n+2}t_{n+4}
+\alpha_1\frac{t_{n+1}t_{n+4}^2+t_{n+2}^2t_{n+5}}{t_{n+3}}
+\alpha_2t_{n+1}t_{n+5}.
\end{equation}
The denominator $t_{n+3}$ requires cancellation at an intermediate
position of the window $(t_n,\ldots,t_{n+5})$, as well as at $t_n$.

To define the general recurrence, regard the $t_i$, $i\in\Z$, as
commuting indeterminates.
Empty products are $1$ and empty sums are $0$. For integers $n$ and
$s\geq k\geq0$, define the polynomials $B_s^k(n)$ by
\begin{align}
B_s^0(n)&=\prod_{r=1}^{s}t_{n+r},\label{eq:Bzero}\\
B_k^k(n)&=\prod_{r=0}^{k-1}t_{n+2r}t_{n+2r+3}\quad(k\geq1),\label{eq:Bdiag}\\
B_s^k(n)&=t_{n+k+s}B_{s-1}^k(n)
+t_{n+k+s-2}t_{n+k+s+1}B_{s-1}^{k-1}(n)
\quad(s>k\geq1).\label{eq:Brec}
\end{align}
These are the polynomials of \cite[Lemma 1.3]{Svinin}.
For $N=2g+2$, the recurrence $R_N$ is
\begin{equation}\label{eq:RN}
\left(\prod_{r=3}^{N-3}t_{n+r}\right)
\left(t_nt_{n+N}-\alpha_gt_{n+1}t_{n+N-1}
-\alpha_0t_{n+2}t_{n+N-2}\right)
=\sum_{k=1}^{g-1}\alpha_kB_{N-k-3}^k(n+1).
\end{equation}

\begin{theorem}\label{thm:main}
Let $g\geq1$ be an integer and put $N=2g+2$. Take
$\alpha_0,\ldots,\alpha_g,t_0,\ldots,t_{N-1}$ to be algebraically independent
over $\Q$, and generate $(t_n)_{n\in\Z}$ by \eqref{eq:RN} in their rational
function field. Then
\[
t_n\in\Z[\alpha_0,\ldots,\alpha_g]
[t_0^{\pm1},\ldots,t_{N-1}^{\pm1}]
\qquad(n\in\Z).
\]
\end{theorem}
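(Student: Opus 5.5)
The plan is to realize the recurrence \eqref{eq:RN} as the exchange relation of a period-one seed in a Laurent phenomenon (LP) algebra in the sense of Lam--Pylyavskyy. Laurentness then follows from their Laurent phenomenon theorem, applied over the coefficient ring $\Z[\alpha_0,\ldots,\alpha_g]$. Concretely, I will take the cluster variables $x_i=t_i$ for $0\le i\le N-1$ and read off from \eqref{eq:RN} the exchange polynomial for $x_0$:
\[
F_0=\alpha_0t_2t_{N-2}+\alpha_gt_1t_{N-1}+\Bigl(\prod_{r=3}^{N-3}t_r\Bigr)^{-1}\sum_{k=1}^{g-1}\alpha_kB^k_{N-k-3}(1),
\]
after first clearing denominators to an honest polynomial. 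Mutation at $x_0$ should produce $t_N$, and shifting indices should return the same seed. The remaining exchange polynomials $F_1,\ldots,F_{N-1}$ will be chosen so that the seed is mutation-periodic with period one, each $F_j$ being the shift $F_{j\bmod N}$ rewritten in terms of the current variables.

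The first step is structural. I will show that $\sum_k\alpha_kB^k_{N-k-3}(n+1)$ is divisible in the Laurent sense only by the monomial $\prod t_{n+r}$, so that the numerator $P(t_1,\ldots,t_{N-1})$ is a polynomial that is irreducible and not divisible by any $t_j$. Irreducibility I will check by looking at the degree in $\alpha_g$ and $\alpha_0$: $P$ is linear in the independent $\alpha$'s with coprime coefficients. I will also establish the palindromic symmetry $P(t_1,\ldots,t_{N-1})=P(t_{N-1},\ldots,t_1)$, which should follow from the recursion \eqref{eq:Brec} by an induction on $s$. This symmetry is what makes the backward recurrence (negative $n$) the same type of relation.

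The main step is verifying the LP axioms for the period-one seed. For every $j$, the exchange Laurent polynomial $\hat F_j$ must be computed, and the compatibility condition must hold: after substituting the mutated variable, $F_i$ should change only by a monomial factor, and the result must remain irreducible. Following the Alman--Cuenca--Huang classification of period-one LP seeds, it suffices to check the single condition relating $F_0$ and the mutated $F_{N-1}$. Equivalently, substituting $t_0\mapsto \hat F_0/t_0$ into $P$ evaluated on the shifted window must leave a polynomial in which $t_0$ divides out exactly to a known power. I expect this to be the main obstacle. It amounts to a combinatorial identity for the $B^k_s$ showing that the intermediate denominator $t_{n+3}$ (and more generally $t_{n+r}$, $3\le r\le N-3$) cancels.

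For this identity, I would first try a lattice-path interpretation of $B^k_s(n)$ coming from \eqref{eq:Brec}: each step is either a single factor $t_{n+k+s}$ or a double factor $t_{n+k+s-2}t_{n+k+s+1}$. Using it, I would prove a generating-function identity in $k$ that exhibits $\sum_k\alpha_kB^k_{N-k-3}$ modulo $t_{n+r}$ as $(\prod_{r'\ne r}t)$ times a factor consistent with the exchange. If that approach stalls, the fallback is to prove Laurentness directly via the Fomin--Zelevinsky "caterpillar" criterion. This requires three checks: that $t_{N+1}$ and $t_{N+2},\ldots$ up to $t_{2N}$ are Laurent, that consecutive terms are coprime, and that periodicity handles the rest. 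The coprimality would come from specializing all $\alpha_k$ with $0<k<g$ to zero, which reduces to the Gale--Robinson case, together with the irreducibility established above. The final coefficient ring is $\Z[\alpha]$ because all exchange polynomials have integer coefficients in the $\alpha$'s.
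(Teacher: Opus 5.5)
Your overall architecture matches the paper's: an LP seed over $\Z[\alpha_0,\ldots,\alpha_g]$ whose mutation at $x_0$ realizes the recurrence, period one after cyclic relabeling, the LP Laurent theorem, irreducibility of $F_0=D_0L_N$ from linearity in $\alpha_0$, and reversal symmetry for negative $n$. But you never supply the seed, and that is where all the content lies.

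Describing $F_1,\ldots,F_{N-1}$ as ``the shift $F_{j\bmod N}$ rewritten in terms of the current variables'' defines nothing. Period one means $F'_i=\tau F_{i-1}$ \emph{after} mutation at $x_0$; it does not mean the $F_j$ are translates of $F_0$. In the paper the interior exchange polynomials are the binomials $F_r=H_r=x_{r-2}x_{r+1}^2+x_{r-1}^2x_{r+2}$ for $2\le r\le N-3$. The polynomials $F_1$ and $F_{N-2}$ are built from translates $q_M,q_L$ of the lower-order polynomial $L_{N-2}$, and $F_{N-1}$ is the reflection of $F_0$.

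Nor can $\wh F_0$ simply be read off from the recurrence: in an LP seed its denominator is forced by the other exchange polynomials. By \cite[Lemma 2.7]{LP}, $x_r$ can occur in it only if $F_r$ does not involve $x_0$, and its exponent is then the multiplicity of $F_r$ in $F_0|_{x_r=F_r/z}$. To get $\wh F_0=L_N$, with denominator $\prod_{r=3}^{N-3}x_r$, you must exhibit, for each interior $r$, an irreducible $F_r$ free of $x_0$ dividing $F_0|_{x_r=F_r/z}$ exactly once. The residue formula of Lemma~\ref{lem:poles} does this with $F_r=H_r$. The ``cancellation of the intermediate denominators'' that you flag as the main obstacle is exactly this identification, and your plan neither names the factor $H_r$ nor connects it to the choice of seed.

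Even with the seed in hand, period one has to be verified at every index whose exchange polynomial involves $x_0$ (here $1,2,N-2,N-1$). This includes the deletion of the extra factors $q_L$ and $Z_{N-1}$ prescribed by the LP mutation rule, and it is the content of the boundary identities in Lemma~\ref{lem:boundary}. Your claim that, by Alman--Cuenca--Huang, a single condition relating $F_0$ and the mutated $F_{N-1}$ suffices is not justified, and the condition as you phrase it is not well formed.

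The fallback does not close the gap either. The criteria you cite are formulated for polynomial exchange relations, whereas here $t_nt_{n+N}$ is a Laurent polynomial with poles at $t_{n+3},\ldots,t_{n+N-3}$. Checking that $t_{N+1},\ldots,t_{2N}$ are Laurent is, for general $g$, the whole difficulty rather than a routine step. Coprimality obtained after setting $\alpha_1=\cdots=\alpha_{g-1}=0$ also does not lift back without an argument that a common factor cannot specialize to a unit.
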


The proof uses Laurent phenomenon (LP) algebras introduced by Lam and
Pylyavskyy \cite{LP}. Their exchange Laurent polynomials are obtained by
dividing the corresponding exchange polynomials by monomials. We construct
a seed whose normalized exchange relation becomes \eqref{eq:RN} after
clearing denominators. After mutation and cyclic relabeling, the seed has
the same form; hence it is period one in the sense of
\cite[Definition 2.4]{ACH}. Independent-set formulas on a path determine
the normalization and give the boundary identities for the mutation.

\paragraph{AI contribution.}
The research proof was generated in a GPT-6-Astra conversation. Codex was
used to reconstruct and check the proof, run finite symbolic checks, and
revise the exposition. Verification comprised AI-assisted proof review
and finite symbolic computation.

\section{Path polynomials and boundary identities}

Let $(x_i)_{i\in\Z}$ be algebraically independent commuting variables.
The parameters $\alpha_0,\ldots,\alpha_g$ are also algebraically independent
over $\Q$ and independent of the $x_i$. We work over
$\Z[\alpha_0,\ldots,\alpha_g]$ with Laurent polynomials in finitely many
of the $x_i$. Assign to
each integer $i$ the Laurent monomial
\begin{equation}\label{eq:u}
u_i=\frac{x_ix_{i+3}}{x_{i+1}x_{i+2}}.
\end{equation}
For integers $p\leq q$ and $k\geq0$, let $\E_k[p,q]$ be the sum of the weights of
$k$-element independent sets of the path with vertices $p,\ldots,q$:
\begin{equation}\label{eq:E}
\E_k[p,q]=
\sum_{\substack{p\leq i_1<\cdots<i_k\leq q\\i_{r+1}-i_r\geq2}}
u_{i_1}\cdots u_{i_k}.
\end{equation}
We put $\E_0[p,q]=1$, including when $p>q$, and put $\E_k[p,q]=0$
when $k<0$ or no such set exists. For example,
$\E_2[p,p+2]=u_pu_{p+2}$: adjacent vertices cannot be chosen together.
Partitioning by the last or the first vertex gives
\begin{align}
\E_k[p,q]&=\E_k[p,q-1]+u_q\E_{k-1}[p,q-2],\label{eq:last}\\
\E_k[p,q]&=\E_k[p+1,q]+u_p\E_{k-1}[p+2,q].\label{eq:first}
\end{align}
These identities hold for $p\leq q$ and all integers $k$ with the stated
zero conventions.

\begin{lemma}\label{lem:B}
For integers $n$ and $s\geq k\geq0$, substitution $t_i=x_i$ gives
\begin{equation}\label{eq:Bpath}
B_s^k(n)=\left(\prod_{r=1}^{s+k}x_{n+r}\right)
\E_k[n,n+s+k-2].
\end{equation}
\end{lemma}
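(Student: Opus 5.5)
Write $P_s^k(n)$ for the right-hand side of \eqref{eq:Bpath}. The plan is to prove $B_s^k(n)=P_s^k(n)$ by induction on $s$, with $k$ fixed. We check that $P_s^k(n)$ satisfies the same three defining relations \eqref{eq:Bzero}, \eqref{eq:Bdiag} and \eqref{eq:Brec}. Since these relations determine $B_s^k(n)$ uniquely, this proves the lemma. The recurrence \eqref{eq:Brec} reduces $(s,k)$ to $(s-1,k)$ and $(s-1,k-1)$, so an induction on $s$ that treats all $k\le s$ at once is enough.

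\emph{Base cases.} For $k=0$ we have $\E_0=1$ by convention, so $P_s^0(n)=\prod_{r=1}^{s}x_{n+r}$, which is \eqref{eq:Bzero}. For $s=k\ge1$ the path is $n,\dots,n+2k-2$, which has $2k-1$ vertices. Its only $k$-element independent set is $\{n,n+2,\dots,n+2k-2\}$, so
\[
\E_k[n,n+2k-2]=\prod_{r=0}^{k-1}u_{n+2r}.
\]
Substituting \eqref{eq:u}, this product is
\[
\prod_{r=0}^{k-1}\frac{x_{n+2r}\,x_{n+2r+3}}{x_{n+2r+1}\,x_{n+2r+2}}.
\]
The denominators $x_{n+2r+1}x_{n+2r+2}$, taken over $r=0,\dots,k-1$, multiply to exactly $\prod_{r=1}^{2k}x_{n+r}$. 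This cancels the prefactor and leaves $\prod_{r=0}^{k-1}x_{n+2r}x_{n+2r+3}$, which is \eqref{eq:Bdiag}.

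\emph{Inductive step, $s>k\ge1$.} Put $q=n+s+k-2$ and apply \eqref{eq:last} on $[n,q]$:
\[
\E_k[n,q]=\E_k[n,q-1]+u_q\,\E_{k-1}[n,q-2].
\]
\begin{itemize}
\item For the first term, the interval $[n,q-1]$ equals $[n,n+(s-1)+k-2]$. Multiplying by $\prod_{r=1}^{s+k}x_{n+r}$ gives $x_{n+s+k}$ times the prefactor $\prod_{r=1}^{s-1+k}x_{n+r}$ of $P_{s-1}^k(n)$. By induction this term is $t_{n+k+s}B_{s-1}^k(n)$.
\item For the second term, the interval $[n,q-2]$ equals $[n,n+(s-1)+(k-1)-2]$, and the prefactor of $P_{s-1}^{k-1}(n)$ is $\prod_{r=1}^{s+k-2}x_{n+r}$. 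Compared with the full prefactor, this leaves the extra factor
\[
x_{n+s+k-1}\,x_{n+s+k}\,u_{n+s+k-2}=x_{n+s+k-2}\,x_{n+s+k+1},
\]
which is exactly the coefficient in \eqref{eq:Brec}. By induction this term is $x_{n+s+k-2}x_{n+s+k+1}B_{s-1}^{k-1}(n)$.
\end{itemize}
Adding the two terms gives \eqref{eq:Brec}, which completes the induction.

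The only delicate point is the edge cases in the inductive step. These are $s-1=k-1$, where the diagonal formula applies, and $k-1=0$, where the interval $[n,q-2]$ may be short or empty and the conventions for $\E_0$ and $\E_{<0}$ matter. We must confirm that \eqref{eq:last} is valid there, which needs $n\le q$. When $s>k\ge1$ we have $q-n=s+k-2\ge1$, so this holds. I expect this bookkeeping to be the main, though minor, obstacle. The rest is the direct monomial cancellation above.
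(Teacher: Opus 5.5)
Your proposal is correct and follows the paper's proof essentially verbatim. You handle the base cases $k=0$ and $s=k$ via \eqref{eq:Bzero} and the unique independent set $\{n,n+2,\ldots,n+2k-2\}$, then multiply \eqref{eq:last} by the prefactor, where $u_{n+s+k-2}x_{n+s+k-1}x_{n+s+k}=x_{n+s+k-2}x_{n+s+k+1}$ supplies the coefficient in \eqref{eq:Brec}. One harmless slip in your closing remark: the diagonal edge case that can arise in the step is $s-1=k$, not $s-1=k-1$, which cannot occur when $s>k$; your induction over all $k\le s$ already covers the case $s-1=k$.
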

\begin{proof}
For $k=0$ this is \eqref{eq:Bzero}. If $s=k\geq1$, the only independent
set of size $k$ in a path of $2k-1$ vertices is
$\{n,n+2,\ldots,n+2k-2\}$. Cancellation using \eqref{eq:u} gives
\eqref{eq:Bdiag}. For $s>k$, multiply \eqref{eq:last} by
$\prod_{r=1}^{s+k}x_{n+r}$. Its two terms give respectively
$x_{n+s+k}B_{s-1}^k(n)$ and
$x_{n+s+k-2}x_{n+s+k+1}B_{s-1}^{k-1}(n)$.
Thus \eqref{eq:Brec} holds.
\end{proof}

For each integer $g\geq1$, put $N=2g+2$ and write
$\boldsymbol\alpha=(\alpha_0,\ldots,\alpha_g)$ for the ordered parameter
tuple. Define the following Laurent polynomial in its listed $N-1$ arguments:
\begin{equation}\label{eq:L}
L_N(x_1,\ldots,x_{N-1};\boldsymbol\alpha)
=x_2x_{N-2}\sum_{k=0}^{g-1}\alpha_k\E_k[1,N-4]
+\alpha_gx_1x_{N-1}.
\end{equation}
For $N=4$ this reads $\alpha_0x_2^2+\alpha_1x_1x_3$.
Without displayed arguments, $L_N$ denotes
$L_N(x_1,\ldots,x_{N-1};\boldsymbol\alpha)$; translated argument lists
will be written explicitly or obtained by the shift defined below.
Lemma \ref{lem:B} rewrites \eqref{eq:RN} as
\begin{equation}\label{eq:RNL}
t_nt_{n+N}=L_N(t_{n+1},\ldots,t_{n+N-1};\boldsymbol\alpha).
\end{equation}
The next identity describes the possible internal poles in \eqref{eq:L}.
For $3\leq r\leq N-3$, set
\begin{equation}\label{eq:H}
H_r=x_{r-2}x_{r+1}^2+x_{r-1}^2x_{r+2}.
\end{equation}
\begin{lemma}\label{lem:poles}
For $N\geq6$, the only possible denominators of $L_N$ are products of
distinct variables among $x_3,\ldots,x_{N-3}$. For every integer $k\geq0$
and $3\leq r\leq N-3$, multiply $\E_k[1,N-4]$ by $x_r$ to clear its
possible pole at $x_r=0$. Evaluation at zero then gives
\begin{equation}\label{eq:residue}
\left.x_r\E_k[1,N-4]\right|_{x_r=0}
=\frac{H_r}{x_{r-1}x_{r+1}}
\sum_{a=0}^{k-1}\E_a[1,r-4]\E_{k-1-a}[r+1,N-4].
\end{equation}
\end{lemma}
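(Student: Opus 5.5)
Our plan is to locate exactly which $u_i$ carry each $x_r$ in a denominator and then separate the independent sets according to those vertices. By \eqref{eq:u}, $x_r$ appears in the denominator of $u_i$ only when $r\in\{i+1,i+2\}$, that is, for $i=r-1$ and $i=r-2$, and in the numerator when $i=r$ or $i=r-3$.

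\emph{First claim (shape of the poles).} The vertices of $\E_k[1,N-4]$ range over $1,\ldots,N-4$. So a denominator variable $x_j$ has $j\in\{2,\ldots,N-2\}$. If $j=2$, the only vertex with $x_2$ in its denominator is $i=1$ (since $i=0$ is absent), but $u_1$ has $x_2$ and $x_3$ in the denominator. We should get the endpoint statement by keeping track of the prefactor $x_2x_{N-2}$ in \eqref{eq:L}. It cancels $x_2$ coming from $u_1$ and $x_{N-2}$ coming from $u_{N-4}$, because $u_1$ and $u_0$ cannot both occur. So $L_N$ has no pole at $x_2$ or $x_{N-2}$.

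For $3\le r\le N-3$, the vertices $r-2$ and $r-1$ are adjacent. Hence an independent set contains at most one of them, and the exponent of $x_r$ in the denominator is at most $1$. This gives the claim that denominators are products of distinct variables among $x_3,\ldots,x_{N-3}$.

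\emph{Second claim (the residue).} Multiply by $x_r$ and set $x_r=0$. Every monomial in which neither $r-2$ nor $r-1$ is chosen becomes $0$. Monomials containing $u_r$ or $u_{r-3}$ already vanish there. We split the surviving sets into two kinds.

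\emph{Sets containing $r-1$.} Such a set excludes $r-2$ and $r$. It consists of $r-1$, an independent set $A\subseteq[1,r-3]$ and a set $C\subseteq[r+1,N-4]$. If $A$ contains $r-3$, the factor $u_{r-3}$ kills the term, since it has $x_r$ in its numerator. So effectively $A\subseteq[1,r-4]$, and this kind contributes
\[
x_ru_{r-1}\big|_{x_r=0}\sum_a \E_a[1,r-4]\E_{k-1-a}[r+1,N-4]
=\frac{x_{r-1}x_{r+2}}{x_{r+1}}\sum_a\cdots.
\]

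\emph{Sets containing $r-2$.} Such a set excludes $r-1$ and $r-3$. It consists of $A\subseteq[1,r-4]$ and $C\subseteq[r,N-4]$, and $C\ni r$ is killed by $u_r$, so $C\subseteq[r+1,N-4]$. Here $x_ru_{r-2}\big|_{x_r=0}=x_{r-2}x_{r+1}/x_{r-1}$.

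Adding the two kinds gives
\[
\frac{x_{r-1}^2x_{r+2}+x_{r-2}x_{r+1}^2}{x_{r-1}x_{r+1}}=\frac{H_r}{x_{r-1}x_{r+1}}
\]
times the common sum, which is \eqref{eq:residue}.

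The main obstacle is bookkeeping near the ends of the path. When $r=3$ or $r=N-3$, vertices like $r-3=0$ or $r-2$ may fall outside $[1,N-4]$, and ranges like $[1,r-4]$ become empty. This has to be reconciled with the conventions $\E_0=1$ on empty paths and $\E_k=0$ otherwise. We also need to confirm that $u_{r-3}$ and $u_r$ themselves carry no other $x_r$ denominators. I would check the cases $r=3$ and $r=N-3$ separately, which the recursions \eqref{eq:last} and \eqref{eq:first} make routine.
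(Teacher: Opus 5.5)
Your proposal is correct and follows the paper's proof essentially step for step. Only $u_{r-2}$ and $u_{r-1}$ carry $x_r$ in a denominator, and their adjacency bounds each pole by one. Terms containing $u_{r-3}$ or $u_r$ vanish, and the two surviving leading coefficients sum to $H_r/(x_{r-1}x_{r+1})$. The prefactor $x_2x_{N-2}$ absorbs the poles coming from $u_1$ and $u_{N-4}$.

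The boundary check you defer is already covered by your argument. For $3\le r\le N-3$, both $r-2$ and $r-1$ lie in $[1,N-4]$, so $r-2$ never falls outside the path. A missing vertex ($0$ when $r=3$, $N-3$ when $r=N-3$) only makes one exclusion case vacuous, and the empty outside paths are handled by the stated conventions for $\E_k$.
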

\begin{proof}
The two weights having $x_r$ in their denominators are $u_{r-2}$ and
$u_{r-1}$. They are adjacent, so a summand in \eqref{eq:E} contains at
most one of them. The weights $u_{r-3}$ and $u_r$, when present, have
$x_r$ in their numerators. Terms containing either of these weights do
not contribute after multiplication by $x_r$ and specialization at zero.
The surviving terms select $r-2$ or $r-1$ and an independent set on the
two outside paths $[1,r-4]$ and $[r+1,N-4]$. Distributing their $k-1$
selected vertices gives the sum on the right of \eqref{eq:residue}.
The sum of the two leading coefficients is
\[
\frac{x_{r-2}x_{r+1}}{x_{r-1}}
+\frac{x_{r-1}x_{r+2}}{x_{r+1}}
=\frac{H_r}{x_{r-1}x_{r+1}},
\]
which proves \eqref{eq:residue}, including the truncated paths at both
boundaries. The same adjacent-vertex argument bounds every pole by one.
At $x_2$ only $u_1$ can contribute a pole, and at $x_{N-2}$ only
$u_{N-4}$ can do so. The prefactor $x_2x_{N-2}$ in \eqref{eq:L} cancels
these poles. No other variable can occur in a denominator.
\end{proof}

For $N\geq6$, the expression $L_{N-2}$ is defined by \eqref{eq:L}
with $g$ replaced by $g-1$ and parameter tuple
$(\alpha_1,\ldots,\alpha_g)$. The variable arguments are assigned in
their listed order. Its three translates used at the boundary are
\begin{align}
q_L&=L_{N-2}(x_1,\ldots,x_{N-3};\alpha_1,\ldots,\alpha_g),\notag\\
q_M&=L_{N-2}(x_2,\ldots,x_{N-2};\alpha_1,\ldots,\alpha_g),\label{eq:q}\\
q_R&=L_{N-2}(x_3,\ldots,x_{N-1};\alpha_1,\ldots,\alpha_g).\notag
\end{align}
For instance,
\begin{equation}\label{eq:qM}
q_M=x_3x_{N-3}\sum_{k=0}^{g-2}\alpha_{k+1}\E_k[2,N-5]
+\alpha_gx_2x_{N-2}.
\end{equation}
Let $Z_i=L_N|_{x_i=0}$ for $i\in\{1,2,N-2,N-1\}$;
these specializations are defined by Lemma \ref{lem:poles}.
Define the shift $\tau$ on
Laurent polynomials by $\tau(x_i)=x_{i+1}$ and
$\tau(\alpha_k)=\alpha_k$, extended by addition and multiplication.
For example, $\tau u_i=u_{i+1}$.

\begin{lemma}\label{lem:boundary}
For $N=2g+2\geq6$, the following identities hold:
\begin{align}
L_N&=Z_1+\frac{x_1}{x_3}q_R
     =Z_{N-1}+\frac{x_{N-1}}{x_{N-3}}q_L,\label{eq:decomp}\\
Z_2&=\frac{x_1}{x_3}q_R,
\qquad Z_{N-2}=\frac{x_{N-1}}{x_{N-3}}q_L,\label{eq:Z2}\\
\tau L_N&=\frac{x_3x_{N-1}}{x_2x_{N-2}}Z_1
          +\frac{x_N}{x_{N-2}}q_M,\label{eq:tauL}\\
L_N(x_0,\ldots,x_{N-2};\boldsymbol\alpha)
&=\frac{x_1x_{N-3}}{x_2x_{N-2}}Z_{N-1}
  +\frac{x_0}{x_2}q_M.\label{eq:leftL}
\end{align}
\end{lemma}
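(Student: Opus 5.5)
We will verify each identity by expanding $L_N$ through the recursions \eqref{eq:last} and \eqref{eq:first} for $\E_k$. Throughout, we track which weights $u_i$ involve the boundary variables $x_1,x_2,x_{N-2},x_{N-1}$. From \eqref{eq:u}, $x_1$ occurs only in $u_1$ (as a numerator factor). The variable $x_2$ occurs in $u_1$ (denominator) and in $u_2$ (numerator). Symmetrically, $x_{N-1}$ occurs only in $u_{N-4}$ (numerator), and $x_{N-2}$ occurs in $u_{N-4}$ (denominator) and in $u_{N-5}$ (numerator). The mirror-image identities follow from the reflection $x_i\mapsto x_{N-i}$, which sends $u_i\mapsto u_{N-3-i}$ and preserves the path $[1,N-4]$. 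So it suffices to prove the left-hand statements of \eqref{eq:decomp} and \eqref{eq:Z2}, together with \eqref{eq:tauL}; \eqref{eq:leftL} is the reflection of \eqref{eq:tauL}.

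\textbf{Step 1: \eqref{eq:decomp}.} Apply \eqref{eq:first} at $p=1$:
\[
\E_k[1,N-4]=\E_k[2,N-4]+u_1\E_{k-1}[3,N-4].
\]
The first term does not involve $x_1$. In the $\alpha_g$ term, $x_1$ appears linearly. Setting $x_1=0$ therefore gives
\[
Z_1=x_2x_{N-2}\sum_k\alpha_k\E_k[2,N-4].
\]
The remainder is
\[
\frac{x_1x_4}{x_3}x_{N-2}\sum_{k\ge1}\alpha_k\E_{k-1}[3,N-4]+\alpha_g x_1x_{N-1}.
\]
We must match this with $\frac{x_1}{x_3}q_R$. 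Shifting indices in \eqref{eq:L}, $q_R$ equals
\[
x_4x_{N-2}\sum_{j=0}^{g-2}\alpha_{j+1}\E_j[3,N-4]+\alpha_g x_3x_{N-1},
\]
where the path length works out because $N-2-4+2=N-4$. The two sides then agree term by term. The only bookkeeping point is that the top index $k=g$ contributes nothing, since $\E_{g-1}[3,N-4]=0$ (the path has $2g-4$ vertices, which is too short).

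\textbf{Step 2: \eqref{eq:Z2}.} We set $x_2=0$ in the decomposition from Step 1. The term $\frac{x_1}{x_3}q_R$ does not involve $x_2$, so it survives unchanged. It remains to show that $Z_1$ vanishes at $x_2=0$, i.e.\ that $x_2\E_k[2,N-4]$ vanishes there. Since $\E_k[2,N-4]$ uses only $u_2,\dots$, the variable $x_2$ enters it only as the numerator factor of $u_2$, so $x_2\E_k[2,N-4]$ is divisible by $x_2$. The earlier poles at $x_3$ are handled by Lemma \ref{lem:poles} and do not affect this evaluation.

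\textbf{Step 3: \eqref{eq:tauL}.} This is the main computation. By definition,
\[
\tau L_N=x_3x_{N-1}\sum_k\alpha_k\E_k[2,N-3]+\alpha_g x_2x_N.
\]
Apply \eqref{eq:last} at $q=N-3$:
\[
\E_k[2,N-3]=\E_k[2,N-4]+u_{N-3}\E_{k-1}[2,N-5],\qquad u_{N-3}=\frac{x_{N-3}x_N}{x_{N-2}x_{N-1}}.
\]
The first piece gives $x_3x_{N-1}\sum_k\alpha_k\E_k[2,N-4]$, which equals $\frac{x_3x_{N-1}}{x_2x_{N-2}}Z_1$ by Step 1. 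The second piece gives
\[
\frac{x_3x_{N-3}x_N}{x_{N-2}}\sum_{k\ge1}\alpha_k\E_{k-1}[2,N-5].
\]
Adding $\alpha_g x_2x_N$ to this produces exactly $\frac{x_N}{x_{N-2}}q_M$ by \eqref{eq:qM}. Once again we must check that the $k=g$ term vanishes: $\E_{g-1}[2,N-5]=0$ because that path has $2g-5$ vertices. We also need the edge case $g=2$, where $N-5<2$ and the range conventions must be checked separately. Finally, \eqref{eq:leftL} follows either by reflection or by the analogous expansion using \eqref{eq:first} at $p=0$.

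The main obstacle is bookkeeping rather than ideas. Specifically, we must verify the vanishing of the top-degree terms $\E_{g-1}$ on short paths, confirm the edge case $g=2$ (where $q_M=\alpha_1x_3^2+\alpha_2x_2x_4$), and check that the reflection really does exchange the two sides of each identity, including the swap of the parameter roles in $q_L$ and $q_R$.
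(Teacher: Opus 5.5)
Your proposal is correct and follows the paper's proof essentially step for step. You use the first-vertex split of $[1,N-4]$ for \eqref{eq:decomp}, the fact that $x_2$ enters $Z_1$ only through the numerator of $u_2$ for \eqref{eq:Z2}, the last-vertex split of $[2,N-3]$ for \eqref{eq:tauL}, and reversal for the mirrored identities; the paper instead obtains \eqref{eq:leftL} by a direct first-vertex split of $[0,N-5]$, which is equivalent to your reflection argument. Your worry about a $k=g$ term is unnecessary, since the sum in \eqref{eq:L} stops at $k=g-1$, and the $g=2$ case is covered by the empty-path conventions; also, $[2,N-5]$ has $2g-4$ vertices, not $2g-5$, which does not affect anything.
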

\begin{proof}
Apply \eqref{eq:first} to \eqref{eq:L}. The part omitting vertex $1$ is
\[
Z_1=x_2x_{N-2}\sum_{k=0}^{g-1}\alpha_k\E_k[2,N-4].
\]
The remaining part, together with $\alpha_gx_1x_{N-1}$, is
$x_1q_R/x_3$. This proves the first decomposition. In $Z_1$, the only
weight containing $x_2$ is $u_2$, which has $x_2$ in its numerator;
therefore $Z_1|_{x_2=0}=0$. Since $q_R$ is independent of $x_2$, this
also proves the first identity in \eqref{eq:Z2}. Reversing the ordered
arguments gives the other two identities.

For \eqref{eq:tauL}, separate the last vertex $N-3$ in the shifted
path $[2,N-3]$. Its weight is
$u_{N-3}=x_{N-3}x_N/(x_{N-2}x_{N-1})$. The terms not selecting it give
the first term of \eqref{eq:tauL}; the terms selecting it, together with
$\alpha_gx_2x_N$, give $x_Nq_M/x_{N-2}$ by \eqref{eq:qM}.
Finally, split the first vertex in the path $[0,N-5]$ for the left-shifted
expression. Its constant part in $x_0$ is
$x_1x_{N-3}\sum_{k=0}^{g-1}\alpha_k\E_k[1,N-5]$.
The last-vertex decomposition of $L_N$ identifies this as the first term
of \eqref{eq:leftL}, and the remaining term is $x_0q_M/x_2$.
\end{proof}

\section{Seed construction}

We use the definitions of \cite[Section 2]{LP}. Let $S$ be a unique
factorization domain (UFD), and let $\Frac(S)$ denote its fraction field.
An LP seed consists of a transcendence basis $x_0,\ldots,x_{N-1}$ of
$\Frac(S)(x_0,\ldots,x_{N-1})$, together with irreducible
polynomials $F_i\in S[x_0,\ldots,x_{N-1}]$, $0\leq i\leq N-1$, with no variable factors,
where $F_i$ does not involve $x_i$.
Its exchange Laurent polynomial $\wh F_i$ is obtained from $F_i$ by
dividing by a monomial in the variables $x_j$, $j\ne i$, with
nonnegative exponents. In the formula below, a hat over a ring generator
means that it is omitted. For each $j\ne i$, introduce a new indeterminate
$z$ and require that
\begin{equation}\label{eq:LPnorm}
\left.\wh F_i\right|_{x_j=F_j/z}
\in S[x_0^{\pm1},\ldots,\widehat{x_j^{\pm1}},\ldots,x_{N-1}^{\pm1},z^{\pm1}]
\quad\text{is not divisible by }F_j.
\end{equation}
Divisibility in \eqref{eq:LPnorm} is taken in that Laurent polynomial ring.
Mutation at $x_0$ replaces it by $x_N=\wh F_0/x_0$; write $F'_i$,
$0\leq i\leq N-1$, for the exchange polynomials of the mutated seed.
In applying the identities of Section 2, the formal variable $x_N$ is
evaluated at this rational function, and index shifts are performed before
this evaluation. Set $F'_0=F_0$. For $i\ne0$, set $F'_i=F_i$ when $F_i$
is independent of $x_0$. Otherwise, \cite[Lemma 2.7]{LP} implies that
$x_i$ does not divide $F_0/\wh F_0$, so $\wh F_0|_{x_i=0}$ is defined.
It is nonzero because $F_0$ has no variable factor. Form
\begin{equation}\label{eq:LPmut}
G_i=\left.F_i\right|_{x_0=(\wh F_0|_{x_i=0})/x_N}.
\end{equation}
Remove from $G_i$ all factors shared with $\wh F_0|_{x_i=0}$, including
their full multiplicities in $G_i$, and multiply by a Laurent monomial
to obtain $F'_i$, a polynomial without variable factors. This determines
$F'_i$ up to a unit of $S$, namely $\pm1$ for the ring below.
The factor removals can be checked after inverting the unchanged variables
$x_1,\ldots,x_{N-1}$, leaving the coefficient ring $S$ unchanged.

The Laurent theorem \cite[Theorem 5.1]{LP} states that every cluster
variable obtained by mutations belongs to
$S[x_0^{\pm1},\ldots,x_{N-1}^{\pm1}]$.

Fix $N=2g+2\geq6$ and $S=\Z[\alpha_0,\ldots,\alpha_g]$. Define
\begin{equation}\label{eq:D}
D_0=\prod_{r=3}^{N-3}x_r,\qquad
D_1=\prod_{r=4}^{N-4}x_r,\qquad
D_R=\prod_{r=3}^{N-5}x_r.
\end{equation}
In particular $D_1=D_R=1$ when $N=6$.
Extend \eqref{eq:H} to $2\leq i\leq N-3$. With $q_L,q_M$ as in
\eqref{eq:q}, set
\begin{align}
F_0&=D_0L_N(x_1,\ldots,x_{N-1};\boldsymbol\alpha),\notag\\
F_1&=D_1(x_0x_3x_{N-1}+x_2q_M),\notag\\
F_i&=H_i\qquad(2\leq i\leq N-3),\label{eq:seed}\\
F_{N-2}&=D_R(x_0x_{N-4}x_{N-1}+x_{N-3}q_L),\notag\\
F_{N-1}&=\left(\prod_{r=2}^{N-4}x_r\right)L_N(x_0,\ldots,x_{N-2};\boldsymbol\alpha).\notag
\end{align}

\begin{proposition}\label{prop:seed}
The polynomials \eqref{eq:seed} form an LP seed over $S$. Its exchange
Laurent polynomial at $x_0$ is
\begin{equation}\label{eq:hat0}
\wh F_0=L_N.
\end{equation}
\end{proposition}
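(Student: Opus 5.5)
We verify the LP seed axioms directly: each $F_i$ must be an irreducible polynomial in $S[x_0,\ldots,x_{N-1}]$ with no variable factors and not involving $x_i$, and the normalized $\wh F_i$ must satisfy \eqref{eq:LPnorm}. The exchange Laurent polynomials will be identified along the way. The main tool is Lemma \ref{lem:poles}, which controls the poles of $L_N$.

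\textbf{Polynomiality and variable factors.} By Lemma \ref{lem:poles}, every pole of $L_N$ is simple and lies among $x_3,\ldots,x_{N-3}$. Hence $F_0=D_0L_N$ is a polynomial. Moreover, $x_r\nmid F_0$ for $3\le r\le N-3$, because the residue \eqref{eq:residue} is nonzero. This holds since the $\alpha_k$ are independent and $\sum_k\alpha_k(\cdot)$ has a nonzero $\alpha_1$ term, proportional to $H_r$. Two further facts need checking:
\begin{itemize}
\item $x_1,x_2,x_{N-2},x_{N-1}$ do not divide $F_0$. Here one uses that $Z_1$, $Z_2$, $Z_{N-2}$, $Z_{N-1}$ from Lemma \ref{lem:boundary} are nonzero.
\item $F_0$ does not involve $x_0$.
\end{itemize}
The analogous checks for $F_{N-1}$ are mirror images. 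For $F_1$ and $F_{N-2}$, Lemma \ref{lem:poles} applied to $L_{N-2}$ shows that $q_M$ has simple poles only at $x_4,\ldots,x_{N-4}$, so $D_1$ clears them. The polynomial $F_1$ is linear in $x_0$, with coefficient $D_1x_3x_{N-1}$ and constant term $D_1x_2q_M$. The $H_i$ are plainly polynomials without variable factors.

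\textbf{Irreducibility.} For $F_1$ and $F_{N-2}$, linearity in $x_0$ gives irreducibility as soon as the two coefficients $D_1x_3x_{N-1}$ and $D_1x_2q_M$ are coprime. It therefore suffices to check that the numerator of $q_M$ is not divisible by $x_3$, $x_{N-1}$, or any $x_r$ with $4\le r\le N-4$. For $F_0$, we use linearity in $\alpha_g$. The $\alpha_g$-coefficient is $D_0x_1x_{N-1}$, and the $\alpha_g$-free part is $D_0x_2x_{N-2}\sum_{k<g}\alpha_k\E_k$. These are coprime because the latter has no monomial factor $x_1$, $x_{N-1}$, or $x_r$ (by the residue computation above). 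The same argument handles $F_{N-1}$.

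For $H_i=x_{i-2}x_{i+1}^2+x_{i-1}^2x_{i+2}$, irreducibility holds because $H_i$ is linear in $x_{i-2}$ with coprime coefficients. For $i=2$ the variable $x_0$ appears, and the same argument applies.

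\textbf{Normalization.} Next we determine each $\wh F_i$ and verify \eqref{eq:LPnorm}. For $F_0$, the divided monomial must be exactly $D_0$, giving $\wh F_0=L_N$ and proving \eqref{eq:hat0}. The condition \eqref{eq:LPnorm} says that after substituting $x_j=F_j/z$ the result is not divisible by $F_j$. By the standard LP criterion, this is equivalent to a statement about each variable $x_j$ appearing in the denominator of $\wh F_i$: that variable's own exchange polynomial $F_j$ must not divide $\wh F_i|_{x_j=F_j/z}$.

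For $\wh F_0=L_N$ and $j=r\in[3,N-3]$, this reduces, after multiplying by $x_r$ and setting $z$ large, to the leading term, which is the residue. That residue is $\frac{H_r}{x_{r-1}x_{r+1}}\cdot(\ldots)$ divided by $F_r/z=H_r/z$. So the $H_r$ factors cancel, and we obtain something not divisible by $H_r$. This is exactly why $F_r=H_r$ was chosen.

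The same argument covers $\wh F_1=F_1/D_1$ with $j\in[4,N-4]$, using the residue of $q_M$, which involves $H_j$ by Lemma \ref{lem:poles} for $L_{N-2}$. It also covers $\wh F_{N-1}$ and $\wh F_{N-2}$. The $H_i$ need no normalization, since $\wh H_i=H_i$ and the non-divisibility is immediate: substituting a variable of $H_i$ yields a Laurent polynomial that is not a multiple of another irreducible $H_j$.

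\textbf{Main obstacle.} I expect the main obstacle to be the normalization check for $j$ at a pole. One must show that after the substitution $x_r=H_r/z$, the full Laurent polynomial, not just its leading $z$-term, is not divisible by $H_r$. The plan is to argue by degree in $z$: divisibility by $H_r$ would have to hold for the top coefficient, and that coefficient is the residue with $H_r$ cancelled.
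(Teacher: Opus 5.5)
Your plan follows the paper's proof closely: polynomiality from Lemma~\ref{lem:poles}, irreducibility from linearity with relatively prime coefficients, and the internal normalization from \eqref{eq:residue}. Two of your choices differ from the paper but are equally valid:
\begin{itemize}
\item For $F_0$ you use linearity in $\alpha_g$, with leading coefficient $D_0x_1x_{N-1}$, instead of linearity in $\alpha_0$. The $\alpha_g$-free part survives $x_1=0$ and $x_{N-1}=0$ through its $\alpha_0$-term, and survives $x_r=0$ through the $\alpha_1$-residue.
\item For multiplicity one you use the top $z$-coefficient, where the paper uses the $\alpha_0$-coefficient $(H_r/z)(D_0/x_r)x_2x_{N-2}$ of $F_0|_{x_r=H_r/z}$. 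Your version still needs one sentence: the top coefficient is $A/H_r$ with $A=(x_rL_N)|_{x_r=0}$, and it is not divisible by $H_r$ because its $\alpha_1$-coefficient is the Laurent monomial $x_2x_{N-2}/(x_{r-1}x_{r+1})$, which is a unit.
\end{itemize}

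The genuine gap is in your claim that the divided monomial is exactly $D_0$. The criterion you quote is not correct. Condition \eqref{eq:LPnorm} must hold for every $j\neq0$, not only for the variables already placed in the denominator. A variable $x_j$ not occurring in $D_0$ is forced into $F_0/\wh F_0$ whenever $F_j$ divides $F_0|_{x_j=0}$, since $F_0|_{x_j=F_j/z}\equiv F_0|_{x_j=0}$ modulo $F_j$. Your checks at $j\in[3,N-3]$ fix the exponents there, but they do not exclude $x_1,x_2,x_{N-2},x_{N-1}$.

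The paper handles these four directions with \cite[Lemma 2.7]{LP}, because $F_1$, $H_2$, $F_{N-2}$ and $F_{N-1}$ all involve $x_0$. Concretely, an irreducible $F_j$ that involves $x_0$ cannot divide the nonzero, $x_0$-free element $F_0|_{x_j=0}=D_0Z_j$. You already established that $Z_j\neq0$. Once this step is added, uniqueness of the normalization gives $\wh F_0=L_N$.

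Finally, your normalizations of $\wh F_1$, $\wh F_{N-2}$, $\wh F_{N-1}$ and $\wh H_i$ are unnecessary. The seed axioms are only irreducibility, absence of variable factors, and independence of $F_i$ from $x_i$; each $\wh F_i$ is then determined by the seed.
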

\begin{proof}
Lemma \ref{lem:poles}, also applied at order $N-2$ with shifted variables
and parameters when $N\geq8$, proves that all expressions in
\eqref{eq:seed} are polynomials. For $N=6$, the lower-order expressions
are copies of the polynomial $L_4$. Each expression omits its assigned
variable.

The coefficient of $\alpha_0$ in $F_0$ is the monomial $D_0x_2x_{N-2}$.
Neither $x_2$ nor $x_{N-2}$ divides the constant part in $\alpha_0$,
because of the term $\alpha_gD_0x_1x_{N-1}$. For an internal variable
$x_r$ with $3\leq r\leq N-3$, the coefficient of $\alpha_1$ in this
constant part has nonzero specialization at $x_r=0$: by
\eqref{eq:residue} for $k=1$, it specializes to
\[
\frac{D_0}{x_r}\,x_2x_{N-2}
\frac{H_r}{x_{r-1}x_{r+1}}\ne0.
\]
This equality may be checked in the Laurent ring of the other variables;
its left side is a polynomial. Thus the leading coefficient and the
constant part in $\alpha_0$ are relatively prime in the polynomial ring
\[
\Z[\alpha_1,\ldots,\alpha_g,x_1,\ldots,x_{N-1}].
\]
Gauss's lemma and linearity in $\alpha_0$ prove that $F_0$ is
irreducible. The same observations exclude every variable factor:
$x_1,x_{N-1}$ do not divide the $\alpha_0$ term, and $x_0$ is absent.

Next regard $F_1$ as linear in $x_0$. Its leading coefficient is the
monomial $D_1x_3x_{N-1}$, with integer coefficient $1$.
The constant part $x_2D_1q_M$ is not divisible by $x_3$, since its
$\alpha_g$ term is $\alpha_gx_2^2D_1x_{N-2}$. It is independent of
$x_{N-1}$. For $4\leq r\leq N-4$, the lower-order version of
\eqref{eq:residue}, using the coefficient $\alpha_2$ in $q_M$, shows
that its specialization at $x_r=0$ is nonzero. This range is empty for
$N=6$; whenever it is nonempty, $g\geq3$ and $\alpha_2$ is present.
Hence the two coefficients in $x_0$ are relatively prime in
$S[x_1,\ldots,x_{N-1}]$, so $F_1$ is
irreducible and has no variable factor.

Reversing the variables by $x_j\mapsto x_{N-1-j}$ exchanges $F_0$ with
$F_{N-1}$ and $F_1$ with $F_{N-2}$. Indeed it reverses the path in
\eqref{eq:L} and exchanges $q_M$ with $q_L$. This proves the same
claims for the two right boundary polynomials. Finally, $H_i$ is linear
in $x_{i-2}$, with relatively prime coefficients $x_{i+1}^2$ and
$x_{i-1}^2x_{i+2}$. Thus all seed axioms hold.

For normalization, \cite[Lemma 2.7]{LP} says that a variable $x_j$ can
divide $F_0/\wh F_0$ only if $F_j$ is independent of $x_0$.
The polynomials $F_1,H_2,F_{N-2},F_{N-1}$ all depend on $x_0$.
The remaining candidates are exactly $j=3,\ldots,N-3$, for which
$F_j=H_j$. Formula \eqref{eq:residue} implies
\[
H_j\mid \left.F_0\right|_{x_j=0}
\]
in the Laurent ring of the remaining variables, and therefore
$H_j\mid F_0|_{x_j=H_j/z}$. The coefficient of $\alpha_0$ in this last
expression is
\[
\frac{H_j}{z}\,\frac{D_0}{x_j}x_2x_{N-2}.
\]
The remaining factor $(D_0/x_j)x_2x_{N-2}/z$ is a Laurent monomial
unit. Since $H_j$ is irreducible and has no variable factor, it remains
a nonunit after this localization. The displayed coefficient is therefore
divisible by $H_j$ exactly once. As $H_j$ is independent of $\alpha_0$,
divisibility of $F_0|_{x_j=H_j/z}$ by $H_j^2$ would force the same
divisibility of every coefficient in $\alpha_0$, a contradiction.
Consequently the exponent of $x_j$ in $F_0/\wh F_0$ is exactly one.
These computations give $F_0/\wh F_0=D_0$, proving \eqref{eq:hat0}.
\end{proof}

\section{Mutation and the Laurent property}

Only four exchange polynomials other than $F_0$ depend on $x_0$.
The boundary identities make their mutation explicit.

\begin{proposition}\label{prop:period}
Mutating the seed \eqref{eq:seed} at $x_0$, and choosing positive signs
for the exchange polynomials, gives
\begin{equation}\label{eq:period}
F'_0=\tau F_{N-1},\qquad F'_i=\tau F_{i-1}\quad(1\leq i\leq N-1).
\end{equation}
Thus the mutated seed, written in the ordered window
$(x_1,\ldots,x_{N-1},x_N)$ with $x_N=L_N/x_0$, has the same form
as the original seed.
\end{proposition}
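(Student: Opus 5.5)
We need to compute $F'_i$ for each $i$ using the mutation rule \eqref{eq:LPmut} with $\wh F_0=L_N$ from Proposition~\ref{prop:seed}, and compare with the shifted seed $\tau F_{i-1}$. First, the cases not involving $x_0$. By definition $F'_0=F_0=D_0L_N(x_1,\ldots,x_{N-1})$, while $\tau F_{N-1}=(\prod_{r=3}^{N-3}x_r)L_N(x_1,\ldots,x_{N-1})$. These coincide, so $F'_0=\tau F_{N-1}$ holds on the nose. For $3\le i\le N-3$, $F_i=H_i$ does not involve $x_0$, so $F'_i=H_i=\tau H_{i-1}=\tau F_{i-1}$. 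Here we use the extension of \eqref{eq:H} to index $2$, so that $\tau F_2=H_3$ as well. Thus only $i\in\{1,2,N-2,N-1\}$ require work, and for these we need $\wh F_0|_{x_i=0}=Z_i$.

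\emph{Case $i=2$.} By \eqref{eq:Z2}, $Z_2=x_1q_R/x_3$. Substituting $x_0=Z_2/x_N$ into $H_2=x_0x_3^2+x_1^2x_4$ gives
\[
G_2=\frac{x_1x_3q_R}{x_N}+x_1^2x_4 .
\]
We remove the common factor $x_1$ and multiply by $x_N$, obtaining $x_3q_R+x_1x_4x_N$. Since $\tau F_1=D_1(\tau)\,(x_1x_4x_N+x_3\tau q_M)$, $\tau q_M=q_R$, and $\tau D_1=\prod_{r=5}^{N-3}x_r$, the factors of $q_R$ shared with $Z_2$ must also be removed. I expect $q_R$ itself to contain the internal factor $\prod_{r=5}^{N-3}x_r^{-1}$ in its denominator, i.e.\ $q_R=(\text{poly})/\tau D_1$ by Lemma~\ref{lem:poles}. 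Clearing denominators then yields exactly $\tau F_1$. The nontrivial point is the removal rule: the factors shared with $Z_2=x_1q_R/x_3$ are irreducible factors of the polynomial part of $q_R$. One must check that $G_2$, after inverting variables, has no common factor with $q_R$. This holds because $G_2\equiv x_1^2x_4$ modulo $q_R$, which is a monomial, and $q_R$ has no variable factor (shown in the proof of Proposition~\ref{prop:seed}, shifted). So only Laurent monomial adjustments occur, and $F'_2=\tau F_1$.

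\emph{Case $i=1$.} From \eqref{eq:decomp}, $Z_1=L_N-x_1q_R/x_3$. Substituting $x_0=Z_1/x_N$ into $F_1=D_1(x_0x_3x_{N-1}+x_2q_M)$ gives
\[
D_1\left(\frac{x_3x_{N-1}Z_1}{x_N}+x_2q_M\right),
\]
and after multiplying by $x_N/(x_2x_{N-2})$ this becomes
\[
\frac{x_3x_{N-1}}{x_2x_{N-2}}Z_1+\frac{x_N}{x_{N-2}}q_M=\tau L_N
\]
by \eqref{eq:tauL}. Hence $G_1$ equals $\tau L_N$ up to a Laurent monomial, and clearing denominators by $\tau D_0$ gives $\tau F_0$. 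We still must show that $\tau L_N$ shares no factor with $Z_1$. Modulo $Z_1$, $G_1$ is a monomial times $q_M$. So we need $\gcd(Z_1,q_M)=1$ in the Laurent ring, which we would check by an $\alpha$-degree argument: $Z_1$ is linear in $\alpha_0$ with monomial coefficient, and $q_M$ is free of $\alpha_0$.

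\emph{Cases $i=N-2,N-1$.} These go the same way using \eqref{eq:Z2} and \eqref{eq:leftL}, but in reversed roles. For $i=N-1$, substitute into $F_{N-1}\propto L_N(x_0,\ldots,x_{N-2})$ and use \eqref{eq:leftL} with $x_0=Z_{N-1}/x_N$. This produces $Z_{N-1}$ times a monomial plus $Z_{N-1}q_M/(x_2x_N)$, so the whole expression is divisible by $Z_{N-1}$. Removing that factor leaves a monomial multiple of $x_1x_{N-3}x_N+x_{N-2}q_M$, which is $\tau F_{N-2}$ since $\tau q_L=q_M$ and $\tau D_R=\prod_{r=4}^{N-4}x_r$. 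The main obstacle is here: we must verify that $Z_{N-1}$ is irreducible, or at least that the full multiplicity removal leaves no residual common factor. This amounts to showing that $\gcd(Z_{N-1},x_1x_{N-3}x_N+x_{N-2}q_M)=1$, which we would settle via linearity in $x_N$. The case $i=N-2$ is analogous: $G_{N-2}$ is $\tau H_{N-3}$ after removing $q_L$. Finally, every $F'_i$ is determined only up to sign, and choosing $+$ gives \eqref{eq:period}.
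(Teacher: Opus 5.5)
Your proposal is correct and follows essentially the paper's route: the indices $0$ and $3\le i\le N-3$ are handled directly, and the four boundary indices via the identities of Lemma~\ref{lem:boundary}, which give the same extra factors $1,1,q_L,Z_{N-1}$ to be deleted. The only difference is how you rule out further common factors with $Z_i$. The paper argues uniformly: the irreducible $\tau F_{i-1}$ involves $x_N$, while $Z_i$ does not. You instead check each case separately, using the monomial remainder modulo $q_R$, the monomial $\alpha_0$-coefficient of $Z_1$, and the unit $x_N$-coefficient at the right end. These checks are valid, but the two you left as ``would settle'' are one-line arguments that should be written out.
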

\begin{proof}
For $i\in\{1,2,N-2,N-1\}$, Proposition \ref{prop:seed} gives
$\wh F_0|_{x_i=0}=Z_i$, with $Z_i$ as in Lemma \ref{lem:boundary}.
The boundary identities give
\begin{align}
\left.\frac{F_1}{D_1}\right|_{x_0=Z_1/x_N}
 &=\frac{x_2x_{N-2}}{x_N}\,\tau L_N,\label{eq:mut1}\\
\left.H_2\right|_{x_0=Z_2/x_N}
 &=\frac{x_1}{x_N}\,\tau\left(\frac{F_1}{D_1}\right),\label{eq:mut2}\\
\left.\frac{F_{N-2}}{D_R}\right|_{x_0=Z_{N-2}/x_N}
 &=\frac{q_L}{x_{N-3}x_N}\,\tau H_{N-3},\label{eq:mutR}\\
\left.\frac{F_{N-1}}{\prod_{r=2}^{N-4}x_r}\right|_{x_0=Z_{N-1}/x_N}
 &=\frac{Z_{N-1}}{x_2x_{N-2}x_N}
       \,\tau\left(\frac{F_{N-2}}{D_R}\right).\label{eq:mutlast}
\end{align}
Equation \eqref{eq:mut1} is \eqref{eq:tauL} multiplied by
$x_2x_{N-2}/x_N$. In \eqref{eq:mut2}, the left side is
$x_1(x_3q_R+x_1x_4x_N)/x_N$, and the parenthesis is
$\tau(F_1/D_1)$. Formula \eqref{eq:mutR} follows from
$Z_{N-2}=x_{N-1}q_L/x_{N-3}$ and
\[
\tau H_{N-3}=x_{N-4}x_{N-1}^2+x_{N-3}^2x_N.
\]
Finally \eqref{eq:leftL} gives \eqref{eq:mutlast}, since
\[
\tau(F_{N-2}/D_R)=x_1x_{N-3}x_N+x_{N-2}q_M.
\]

Restoring the monomial prefactors in \eqref{eq:seed}, the expressions
$G_i$ are variable-monomial multiples of $\tau F_{i-1}$, with the
following additional factors:
\[
\begin{array}{c|cccc}
i&1&2&N-2&N-1\\\hline
\text{additional factor}&1&1&q_L&Z_{N-1}.
\end{array}
\]
All denominators of these factors are variable monomials.
Every nonvariable factor of $q_L$ occurs in
$Z_{N-2}=x_{N-1}q_L/x_{N-3}$, and the last factor is $Z_{N-1}$ itself.
These are exactly the factors that \eqref{eq:LPmut} deletes.
For each of these four indices, the target $\tau F_{i-1}$ remains
irreducible in $S[x_1^{\pm1},\ldots,x_{N-1}^{\pm1}][x_N]$, since it
is irreducible before localization and has no variable factor. A nonunit
common factor would therefore force $\tau F_{i-1}$ to divide $Z_i$.
This is impossible: $\tau F_{i-1}$ is nonconstant in $x_N$, whereas
the nonzero element $Z_i$ lies in the coefficient ring
$S[x_1^{\pm1},\ldots,x_{N-1}^{\pm1}]$. Further inverting $x_N$ cannot
create a common factor, since $x_N$ does not divide the target.
Polynomial normalization therefore yields
$F'_i=\tau F_{i-1}$ for the four indices in the table. 

For $3\leq i\leq N-3$, $F_i=H_i$ is independent of $x_0$, and
$H_i=\tau H_{i-1}$. These polynomials are unchanged by mutation.
At the mutation position,
$F'_0=F_0=\tau F_{N-1}$, directly from \eqref{eq:seed}.
This proves \eqref{eq:period} for every index.
\end{proof}

\begin{proof}[Proof of Theorem \ref{thm:main}]
For $N\geq6$, Propositions \ref{prop:seed} and \ref{prop:period} identify
each forward step of \eqref{eq:RNL} with a mutation of the seed
\eqref{eq:seed}, followed by cyclic relabeling. The LP Laurent theorem
over $S=\Z[\alpha_0,\ldots,\alpha_g]$ gives the asserted membership
for $n\geq0$. For $N=4$, the same membership is
\cite[Theorem 1.8]{FZ}, with $(p,q,N)=(1,2,4)$ and coefficients
$(\alpha_1,\alpha_0)$.

The inverse of the window map in \eqref{eq:RNL} is
\[
t_{n-1}=
\frac{L_N(t_n,\ldots,t_{n+N-2};\boldsymbol\alpha)}{t_{n+N-1}}.
\]
The forward and inverse window maps are mutually inverse rational maps.
Hence every window is a transcendence basis over
$\Q(\alpha_0,\ldots,\alpha_g)$, and its entries
are nonzero elements of the rational function field.
Finally, reversing the arguments $x_i\mapsto x_{N-i}$ in \eqref{eq:L}
reverses the weights $u_i\mapsto u_{N-3-i}$ and preserves every
independent-set sum. Hence $L_N$ is invariant under reversal.
The reversed sequence $(t_{N-1-n})_{n\in\Z}$ satisfies the same recurrence.
Applying the forward result with initial values $t_{N-1},\ldots,t_0$
proves the assertion for negative indices.
\end{proof}

\begin{corollary}\label{cor:integers}
Assign integer values to the parameters and values in $\{1,-1\}$ to
the initial variables. Evaluation of the universal Laurent polynomials
gives an integer sequence satisfying the polynomial identities
\eqref{eq:RN}. It agrees with the recursively generated sequence
wherever all required denominators are nonzero.
\end{corollary}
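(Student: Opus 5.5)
The plan is to evaluate the universal Laurent polynomials given by Theorem \ref{thm:main} at the specified values, and to transport the polynomial identity \eqref{eq:RN} along this evaluation.

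\emph{Integrality.} Fix $c_0,\ldots,c_g\in\Z$ and $\epsilon_0,\ldots,\epsilon_{N-1}\in\{1,-1\}$. Consider the ring homomorphism
\[
\phi:\Z[\alpha_0,\ldots,\alpha_g][t_0^{\pm1},\ldots,t_{N-1}^{\pm1}]\to\Z,
\qquad \alpha_k\mapsto c_k,\quad t_j\mapsto\epsilon_j .
\]
It is well defined because each $\epsilon_j$ is a unit of $\Z$, so inverting $t_j$ causes no problem. By Theorem \ref{thm:main}, every $t_n$ lies in the domain of $\phi$. Put $s_n=\phi(t_n)\in\Z$ for all $n\in\Z$. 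This sequence is integral by construction.

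\emph{The identity.} In the rational function field, \eqref{eq:RN} holds for the universal sequence $(t_n)$. Both sides are polynomials over $\Z[\boldsymbol\alpha]$ in the $t_m$, using \eqref{eq:Bzero}--\eqref{eq:Brec}, so they are elements of the Laurent ring on which $\phi$ is defined. The identity is therefore an equality in that subring, and applying the homomorphism $\phi$ gives \eqref{eq:RN} for $(s_n)$ with $\alpha_k=c_k$. For $N=4$ the same argument applies to \eqref{eq:somos4}.

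\emph{Agreement with the recursion.} Let $(s'_n)$ be the sequence generated from $s'_j=\epsilon_j$, $0\le j\le N-1$. Forward steps solve \eqref{eq:RN} for $s'_{n+N}$ by dividing by $s'_n\prod_{r=3}^{N-3}s'_{n+r}$. Backward steps use the inverse map and divide by the analogous product, using the reversal symmetry from the proof of the theorem. I argue by induction on $|n|$, starting from the window $0,\ldots,N-1$, that whenever every denominator used so far is nonzero, $s'_n=s_n$.
\begin{itemize}
\item Suppose the window entries agree.
\item Then $s_{n+N}$ and $s'_{n+N}$ both satisfy the same linear equation $D\cdot X=R$, where $D$ and $R$ are built from the agreeing values. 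The equation for $s_{n+N}$ comes from the specialized identity just proved.
\item If $D\ne0$, the solution is unique, so $s_{n+N}=s'_{n+N}$.
\end{itemize}
The only point needing care is this bookkeeping. "Required denominators" must mean the products $t_n\prod t_{n+r}$ (respectively $t_{n+N-1}\prod t_{n+r}$ for backward steps) encountered along the chain from the initial window to index $n$, and the uniqueness step uses nothing beyond their being nonzero integers.
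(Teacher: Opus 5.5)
Your proposal is correct and takes the same route as the paper: evaluate the Laurent ring of Theorem~\ref{thm:main} at the unit values $\pm1$ and apply the resulting homomorphism to \eqref{eq:RN}. Your window-by-window uniqueness argument for agreement with the numerically generated sequence uses only that each linear equation has a nonzero coefficient, and it spells out a step the paper leaves implicit.
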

\begin{proof}
The evaluation map on the Laurent ring of Theorem \ref{thm:main} is
defined for these initial values and takes values in $\Z$. Apply it
to \eqref{eq:RN}.
\end{proof}

\section{The order-six recurrence}

For $N=6$, the seed \eqref{eq:seed} consists of
\begin{align*}
F_0&=\alpha_0x_2x_3x_4+\alpha_1x_1x_4^2
     +\alpha_1x_2^2x_5+\alpha_2x_1x_3x_5,\\
F_1&=\alpha_1x_2x_3^2+\alpha_2x_2^2x_4+x_0x_3x_5,\\
F_2&=x_0x_3^2+x_1^2x_4,\\
F_3&=x_1x_4^2+x_2^2x_5,\\
F_4&=\alpha_1x_2^2x_3+\alpha_2x_1x_3^2+x_0x_2x_5,\\
F_5&=\alpha_0x_1x_2x_3+\alpha_1x_0x_3^2
     +\alpha_1x_1^2x_4+\alpha_2x_0x_2x_4.
\end{align*}
The normalizations at the two ends are
\[
\wh F_0=F_0/x_3,\qquad \wh F_5=F_5/x_2;
\]
the other four exchange polynomials have trivial normalization.
For the first identity, write
\[
F_0=x_3(\alpha_0x_2x_4+\alpha_2x_1x_5)+\alpha_1F_3.
\]
Substitution gives
\[
\left.F_0\right|_{x_3=F_3/z}
=F_3\left(\frac{\alpha_0x_2x_4+\alpha_2x_1x_5}{z}+\alpha_1\right).
\]
The second factor has $\alpha_1$-coefficient $1$, so it is not divisible
by $F_3$. Thus $F_3$ divides the substituted expression exactly once.
Reflection gives the normalization at the other end. For $1\leq i\leq4$, inspection of the
six displayed polynomials shows that every $F_j$ with $j\ne i$
depends on $x_i$; \cite[Lemma 2.7]{LP} therefore excludes a normalization
factor in these four directions.
The exchange $x_0x_6=\wh F_0$ is exactly \eqref{eq:R6}.

The four substitutions in \eqref{eq:LPmut} are explicit in this case:
\[
\renewcommand{\arraystretch}{1.65}
\begin{array}{c|c|c}
i&Z_i&G_i\\\hline
1&\dfrac{x_2(\alpha_0x_3x_4+\alpha_1x_2x_5)}{x_3}
 &(x_2/x_6)\tau F_0\\
2&\dfrac{x_1(\alpha_1x_4^2+\alpha_2x_3x_5)}{x_3}
 &(x_1/x_6)\tau F_1\\
4&\dfrac{x_5(\alpha_1x_2^2+\alpha_2x_1x_3)}{x_3}
 &\dfrac{\alpha_1x_2^2+\alpha_2x_1x_3}{x_3x_6}\tau F_3\\
5&\dfrac{x_4(\alpha_0x_2x_3+\alpha_1x_1x_4)}{x_3}
 &\dfrac{\alpha_0x_2x_3+\alpha_1x_1x_4}{x_3x_6}\tau F_4.
\end{array}
\]
The last two rows delete the factors
$\alpha_1x_2^2+\alpha_2x_1x_3$ and
$\alpha_0x_2x_3+\alpha_1x_1x_4$, respectively.
Together with $F_3=\tau F_2$ and $F_0=\tau F_5$, this explicitly
exhibits the period-one mutation of the $R_6$ seed.

\bibliographystyle{plain}
\bibliography{references}

\bigskip
\noindent\textsc{Zhili College, Tsinghua University, Beijing 100084, China}

\noindent\textit{Email address:}
\href{mailto:tangqy24@mails.tsinghua.edu.cn}{tangqy24@mails.tsinghua.edu.cn}
\end{document}